\newtheoremstyle{mytheorem}%
{5pt}%
{3pt}%
{\itshape}%
{1pt}%
{\bf}%
{.}%
{.5em}%
{}%
\newtheoremstyle{myremark}%
{5pt}%
{3pt}%
{\upshape}%
{1pt}%
{\em}%
{.}%
{.5em}%
{}%
\theoremstyle{mytheorem}
\newtheorem{theorem}{Theorem}[section]
\newtheorem{corollary}[theorem]{Corollary}
\theoremstyle{definition}
 \theoremstyle{myremark}
\makeatletter \renewenvironment{proof}[1][\proofname] {\par\pushQED{\qed}\normalfont\topsep6\p@\@plus6\p@\relax\trivlist\item[\hskip\labelsep\itshape #1\@addpunct{.}]\ignorespaces}{\popQED\endtrivlist\@endpefalse} \makeatother
\renewcommand{\phi}{\varphi}
\renewcommand{\theta}{\vartheta}
\newcommand{\lmu}{\mathscr{L}^2(X,\mathfrak{M},\mu)}
\newcommand{\dom}{\operatorname{dom}}
\newcommand{\ran}{\operatorname{ran}}
\newcommand{\hil}{\mathfrak{H}}
\newcommand{\kil}{\mathfrak{K}}
\DeclarePairedDelimiterX\sip[2]{(}{)}{#1\,\delimsize\vert\,#2}
\DeclarePairedDelimiterX\sipn[2]{(}{)_{\nu}}{#1\,\delimsize\vert\,#2}
\DeclarePairedDelimiterX\sipm[2]{(}{)_{\mu}}{#1\,\delimsize\vert\,#2}
\DeclarePairedDelimiterX\set[2]{\{}{\}}{#1\,\delimsize\vert\,#2}
\newcommand{\ort}[1]{\{#1\}^{\perp}}
\begin{document}
\title{Operators having selfadjoint squares}

\author[Z. Sebesty\'en]{Zolt\'an Sebesty\'en}
\address{Z. Sebesty\'en, Department of Applied Analysis, E\"otv\"os L. University, P\'azm\'any P\'eter s\'et\'any 1/c., Budapest H-1117, Hungary; }
\email{sebesty@cs.elte.hu}

\author[Zs. Tarcsay]{Zsigmond Tarcsay}
\address{Zs. Tarcsay, Department of Applied Analysis, E\"otv\"os L. University, P\'azm\'any P\'eter s\'et\'any 1/c., Budapest H-1117, Hungary; }

\email{tarcsay@cs.elte.hu}

\keywords{Essentially selfadjoint operators, symmetric operators, skew-adjoint operators, skew-symmetric operators}
\subjclass[2010]{Primary 47B25, 47B65}

\maketitle

\begin{abstract}
The main goal of this paper is to show that a (not necessarily densely defined or closed) symmetric operator $A$ acting on a real or complex  Hilbert space is selfadjoint exactly when $I+A^2$  is a full range operator.
\end{abstract}

\section{Introduction}

If $T$ is a densely defined closed operator between two Hilbert spaces, $\hil$ and $\kil$, a classical theorem due to John von Neumann \cite{vonNeumann} states that $I+T^*T$ is selfadjoint operator with full range. As an immediate consequence of that result one obtains also that the square of a selfadjoint operator, say $A$, is selfadjoint as well,  furthermore, that $I+A^2$ is surjective. If the underlying Hilbert space $\hil$ is complex, by employing the classical theory of deficiency indices, also due to von Neumann \cite{vonNeumann1930}, we conclude that the converse of the latter statement is also true. Precisely, if $A$ is a densely defined symmetric operator in a complex Hilbert space $\hil$ such that $I+A^2$ is surjective, then the original operator $A$ must be selfadjoint. Indeed, according to the factorizations
\begin{equation}\label{factorization}
    A^2+I=(A+iI)(A-iI)=(A-iI)(A+iI),
\end{equation}
it is seen readily that both $A\pm iI$ must be onto, and therefore that $A$ is selfadjoint.

Factorization \eqref{factorization} cannot be used, of course, when the underlying Hilbert space $\hil$ is real. Furthermore, if the symmetric operator is not densely defined, the theory of deficiency indices is again unapplicable, even if $\hil$ is complex.

The main purpose of this note is to prove that the following characterization of selfadjointness holds, be the underlying Hilbert space real or complex: a symmetric operator $A$ on a (real or complex) Hilbert space is selfadjoint if and only if $I+A^2$ is surjective. Observe also that the symmetric operator under consideration is not assumed to be densely defined a priori. On the contrary, densely definedness is also a direct consequence of our other assumptions.

\section{Operators having selfadjoint squares}
 Recall that an operator $A$ defined in a Hilbert space $\hil$ is said to be symmetric if
 \begin{equation*}
    \sip{Ax}{y}=\sip{x}{Ay}, \qquad x,y\in\dom A,
 \end{equation*}
 and skew-symmetric if
 \begin{equation*}
    \sip{Ax}{y}=-\sip{x}{Ay}, \qquad x,y\in\dom A.
 \end{equation*}
 If $A$ is densely defined in addition then the symmetry (resp., skew-symmetry) of $A$ means that $A\subseteq A^*$ (resp., $A\subseteq -A^*$). Furthermore, a densely defined operator $A$ is said to be selfadjoint (resp., skew-adjoint) if $A=A^*$ (resp., $A=-A^*$). Note also immediately that each selfadjoint (resp., skew-adjoint) operator is closed.

 Our first result is a characterization of the skew-adjointness of an operator in terms of its square:
\begin{theorem}\label{T:maintheorem1}
    Let $\hil$ be real or complex Hilbert space and $A:\hil\to \hil$ a skew-symmetric linear operator, whose domain $\dom A$ is not assumed to be dense. The following statements are equivalent:
    \begin{enumerate}[\upshape (i)]
      \item $A$ is densely defined and skew-adjoint;
      \item $-A^2$ is a (positive) selfadjoint operator;
      \item $I-A^2$ is a  full range operator, i.e. $\ran (I-A^2)=\hil$.
    \end{enumerate}
\end{theorem}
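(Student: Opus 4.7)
My plan is to establish the cyclic chain (i)$\Rightarrow$(ii)$\Rightarrow$(iii)$\Rightarrow$(i). For the first two implications I would appeal directly to von Neumann's theorem: from $A=-A^*$ it follows that $-A^2 = A^*A$ is positive selfadjoint, while if $-A^2$ is positive selfadjoint then $I-A^2$ has spectrum contained in $[1,\infty)$ and is in particular surjective.

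The substantive direction is (iii)$\Rightarrow$(i), which I would build around two uses of skew-symmetry. The first is the identity
\begin{equation*}
  \sip{(I-A^2)x}{x} = \|x\|^2 + \|Ax\|^2 \qquad (x \in \dom A^2),
\end{equation*}
which forces $\|(I-A^2)x\|\geq\|x\|$ and so makes $I-A^2:\dom A^2\to\hil$ bijective. Next I would exploit the factorizations $(I+A)(I-A)=(I-A)(I+A)=I-A^2$ on $\dom A^2$, together with the pointwise estimate $\|(I\pm A)x\|^2 = \|x\|^2 + \|Ax\|^2$ for $x\in\dom A$, to conclude that $I+A$ and $I-A$ are bijections $\dom A\to\hil$ with contractive, everywhere defined inverses
\begin{equation*}
L = (I-A)^{-1}, \qquad R = (I+A)^{-1}.
\end{equation*}
A short direct computation using skew-symmetry should give $\sip{Ru}{v}=\sip{u}{Lv}$ for all $u,v\in\hil$, hence $R^*=L$. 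Since $L$ is injective, this will force $\ran R = \dom A$ to be dense. Finally, using $A = R^{-1}-I$ on $\dom A$ and $(R^{-1})^*=(R^*)^{-1}=L^{-1}$, I expect to identify $A^* = L^{-1}-I = -A$, which is the desired skew-adjointness.

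The main obstacle is structural rather than computational: the theorem does not presume $A$ to be densely defined, so $A^*$ cannot even be formed until density has been established. My resolution is to construct the bounded everywhere defined contractions $R$ and $L$ before taking any adjoints, whereupon density of $\dom A$ emerges automatically from the injectivity of $L=R^*$. The only genuine ingredient in every estimate and in the verification $R^*=L$ is the single skew-symmetry relation $\sip{Ax}{y}=-\sip{x}{Ay}$.
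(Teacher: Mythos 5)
Your proposal is correct, but it takes a genuinely different route from the paper's for the substantive implication (iii)$\Rightarrow$(i); the first two implications coincide with the paper's (von Neumann's theorem, then the lower bound $I-A^2\geq I$). Where the paper stays entirely with unbounded operators --- proving density of $\dom A^2$ by the duality trick ($y\in\ort{\dom A^2}$ is written as $y=(I-A^2)z$, whence $z\in\ort{\ran(I-A^2)}=\{0\}$), and then proving $\dom A^*\subseteq\dom A$ by solving $(I-A^*)y=(I+A)(I-A)z$ and observing that $y-(I-A)z\in\ker(I-A)^*=\ort{\ran(I-A)}=\{0\}$ --- you instead build the bounded, everywhere defined contractions $R=(I+A)^{-1}$ and $L=(I-A)^{-1}$ from the same factorization $I-A^2=(I\pm A)(I\mp A)$ and the estimate $\|(I\pm A)x\|^2=\|x\|^2+\|Ax\|^2$, verify $R^*=L$ on the level of bounded operators, and only then take adjoints. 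Your argument checks out: density of $\dom A=\ran R$ follows from $\ker R^*=\ort{\ran R}$ and the injectivity of $L$, and the identity $(R^{-1})^*=(R^*)^{-1}$ (valid here since $R$ is bounded, injective, with dense range) together with $(I+A)^*=I+A^*$ yields $I+A^*=L^{-1}=I-A$, hence $A^*=-A$ with equality of domains. What each approach buys: the paper's is shorter and needs nothing beyond $\ker T^*=\ort{\ran T}$; yours is a Cayley-transform-flavoured argument that produces the quantitative information $\|(I\pm A)x\|^2=\|x\|^2+\|Ax\|^2$ and reduces the whole problem to a single adjoint computation for bounded operators, at the cost of invoking the adjoint-of-inverse identity. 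One small remark: your opening step, that $\|(I-A^2)x\|\geq\|x\|$ makes $I-A^2$ bijective, is correct but not actually used later; the injectivity and surjectivity of $I\pm A$ already carry the whole argument.
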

\begin{proof}
    If $A$ is skew-adjoint, then clearly, $A$ is closed, and the following identity
    \begin{equation*}
        -A^2=A^*A
    \end{equation*}
    shows statement (ii), thanks to von Neumann's classical theorem. By assuming (ii), the operator $I-A^2$ is positive and selfadjoint, and  bounded below (by one), therefore its range is dense, and closed in $\hil$, that is $\ran (I-A^2)=\hil.$ Assume finally that the symmetric operator $I-A^2$ is of full range. Then it is densely defined and positive selfadjoint, as we see at once. First, $\dom (I-A^2)$ is dense, for if $y$ is from $\ort{\dom (I-A^2)}=\ort{\dom A^2}$, then one takes into account that $y=(I-A^2)z$ for some $z\in\dom A^2.$ We have at once for each $x$ from $\dom (I-A^2)$ that
    \begin{equation*}
        0=\sip{x}{(I-A^2)z}=\sip{(I-A^2)x}{z}.
    \end{equation*}
    Therefore, $z$ belongs to $\ort{\ran(I-A^2)}=\{0\}$ by assumption, thus $y=0$, as claimed.

    One more consequence is that $A$ is densely defined skew-symmetric operator, thus fulfilling the following identity:
    \begin{equation*}
        A\subset-A^*.
    \end{equation*}
    To prove statement (i) one checks only that $\dom A^*\subseteq\dom A$. Let now $y\in\dom A^*$ and take some $z\in\dom A^2$ by assumption so that
    \begin{equation*}
        y-A^*y=(I-A^2)z=(I+A)(I-A)z.
    \end{equation*}
    Then we have, since $I+A\subset I-A^*$, that
    \begin{align*}
        (y-(I-A)z)\in\ker(I-A^*)&=\ker(I-A)^*=\ort{\ran(I-A)}\\ &\subset\ort{\ran (I-A^2)}=\{0\}.
    \end{align*}
    This means just that $y=(I-A)z\in\dom A$, as it is claimed.
\end{proof}

The main result of our paper is the following statement:
\begin{theorem}\label{T:maintheorem2}
    Let $\hil$ be real or complex Hilbert space and $A:\hil\to\hil$ a symmetric operator whose domain is not assumed to be dense subspace in $\hil$. The following assertions are equivalent:
    \begin{enumerate}[\upshape (i)]
      \item $A$ is densely defined and selfadjoint operator;
      \item $A^2$ is a positive selfadjoint operator;
      \item $I+A^2$ is a full range operator, i.e. $\ran (I+A^2)=\hil$.
    \end{enumerate}
\end{theorem}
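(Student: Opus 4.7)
The plan is to mirror the three-step structure of the proof of Theorem~\ref{T:maintheorem1}. The implications (i)$\Rightarrow$(ii) and (ii)$\Rightarrow$(iii) are immediate: the identity $A^2 = A^*A$ reduces (i)$\Rightarrow$(ii) to von Neumann's classical theorem, while in (ii)$\Rightarrow$(iii) the operator $I+A^2$ is positive selfadjoint and bounded below by $1$, so its range is closed and dense and therefore all of $\hil$.

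The substantive implication is (iii)$\Rightarrow$(i), and the difficulty here is that the factorization $I+A^2=(I-iA)(I+iA)$, which would play the role of $I-A^2=(I+A)(I-A)$ used in the proof of Theorem~\ref{T:maintheorem1}, is unavailable over a real Hilbert space. The idea I would adopt is to implement the missing ``multiplication by $i$'' geometrically on $\hil\oplus\hil$. Setting $\dom B := \dom A \oplus \dom A$, define
\begin{equation*}
    B(x,y) := (-Ay,\, Ax).
\end{equation*}
A routine check shows that $B$ is skew-symmetric, and for $(x,y)\in\dom A^2\oplus\dom A^2$ one computes $B^2(x,y)=(-A^2x,-A^2y)$, whence
\begin{equation*}
    (I-B^2)(x,y) = \bigl((I+A^2)x,\,(I+A^2)y\bigr).
\end{equation*}
The hypothesis $\ran(I+A^2)=\hil$ therefore yields $\ran(I-B^2)=\hil\oplus\hil$.

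Theorem~\ref{T:maintheorem1}, whose proof is valid over both real and complex scalars, now applies to $B$: it is densely defined and skew-adjoint. Density of $\dom B = \dom A \oplus \dom A$ immediately gives density of $\dom A$ in $\hil$; and a short computation of the adjoint yields $\dom B^* = \dom A^* \oplus \dom A^*$ with $B^*(u,v)=(A^*v,-A^*u)$, so that $B=-B^*$ forces $\dom A = \dom A^*$ and $A=A^*$, as desired. The principal obstacle is thus purely conceptual, namely the choice of companion operator $B$ that simulates the imaginary unit without using it; once $B$ is in hand, the reduction to the skew-adjoint case already established in Theorem~\ref{T:maintheorem1} is automatic.
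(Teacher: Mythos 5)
Your proof is correct, and it takes a genuinely different route from the paper's. The paper proves (iii)$\Rightarrow$(i) directly inside $\hil$: after establishing density of $\dom A^2$ (hence of $\dom A$) by the same orthogonality argument as in Theorem \ref{T:maintheorem1}, it takes $z\in\dom A^*$, uses surjectivity to solve $A^*z=(I+A^2)x$ and $-z=(I+A^2)y$ with $x,y\in\dom A^2$, and rearranges these two identities to obtain $A^*A(x+Ay)+(x+Ay)=0$; pairing with $x+Ay$ yields $\|A(x+Ay)\|^2+\|x+Ay\|^2=0$, hence $x+Ay=0$ and $z=Ax-y\in\dom A$. Your argument packages essentially the same algebra into the doubled operator $B(x,y)=(-Ay,Ax)$ on $\hil\times\hil$: the verification that $B$ is skew-symmetric, that $\dom B^2=\dom A^2\times\dom A^2$ with $(I-B^2)(x,y)=((I+A^2)x,(I+A^2)y)$, and that $\dom B^*=\dom A^*\times\dom A^*$ with $B^*(u,v)=(A^*v,-A^*u)$ are all routine and correct, so Theorem \ref{T:maintheorem1} applies and $B=-B^*$ forces $\dom A^*=\dom A$ and $A=A^*$. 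What your reduction buys is economy and a transparent conceptual picture: $B$ is exactly multiplication by $i$ realized real-linearly, so the factorization idea behind $I+A^2=(A+iI)(A-iI)$ survives in the real and non-densely-defined setting by proxy. What the paper's computation buys is a self-contained proof that never leaves $\hil$ and does not depend on Theorem \ref{T:maintheorem1}. Both arguments are complete.
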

\begin{proof}
    If $A$ is assumed to be selfadjoint, then $A^2=A^*A$ is positive selfadjoint operator in virtue of von Neumann's classical theorem. Therefore, (i) implies (ii). Statement (ii) also clearly implies (iii) as $(I+A^2)$ is positive selfadjoint and bounded below (by one) operator whose range and closed as well, therefore is the whole space $\hil$. It remains to prove implication (i)$\Rightarrow$(ii). First of all, $A^2$ is densely defined: for if $y$ is from $\ort{\dom A^2}$, then, since $y=(I+A^2)z$ for some $z$ from $\dom A^2$, and at the same time for each $x$ from $\dom A^2$
    \begin{align*}
        0=\sip{x}{y}=\sip{x}{(I+A^2)z}=\sip{(I+A^2)x}{z}
    \end{align*}
    holds true. This means, of course, that $z$ is from $\ort{\ran (I+A^2)}=\{0\}$, and therefore that $y=0$, indeed.
    One more consequence is that $\dom A$ is dense as well, and thus
    \begin{equation*}
        A\subset A^*,
    \end{equation*}
    by our assumption on the symmetricity of $A$.

    The last step in to check that $A$ is selfadjoint is that $\dom A^*\subseteq\dom A$ as follows. Take $z\in\dom A^*$, then for some $x$ and $y$ from $\dom A^2$ we have that
    \begin{equation*}
        A^*z=(I+A^2)x\qquad \textrm{and}\qquad -z=(I+A^2)y.
    \end{equation*}
    This means at the same time that
    \begin{align*}
    \left\{
      \begin{array}{l}
        -z=A(x+Ay)-(Ax-y), \\
        A^*z=A(Ax-y)+(x+Ay),
      \end{array}
    \right.
    \end{align*}
    and consequently, since $Ax-y\in\dom A\subseteq\dom A^*$, that  $(z-(Ax-y))\in\dom A^*$ and
    \begin{align*}
        A^*(z-(Ax-y))=A^*z-A^*(Ax-y)=A^*z-A(Ax-y)=x+Ay,
    \end{align*}
    and as well that
    \begin{equation*}
        0=-A^*z+A^*z=A^*A(Ax+y)+(x+Ay).
    \end{equation*}
    As a consequence we finally have that
    \begin{align*}
        0&=\sip{A^*A(Ax+y)}{Ax+y}+\sip{x+Ay}{x+Ay}\\
         &=\|A(Ax+y)\|^2+\|x+Ay\|^2.
    \end{align*}
    Therefore $x+Ay=0$, so that $z=Ax-y\in\dom A$, indeed. The proof is complete.
\end{proof}
Another characterization of selfadjoint and skew-adjoint operators involving the ranges of $I\pm A^2$ is given in the next corollary:
\begin{corollary}
    Let $A$ be a densely defined symmetric (resp., skew-symmetric) operator in the real or complex Hilbert space $\hil$. Then the following are equivalent:
    \begin{enumerate}[\upshape (i)]
      \item $A$ is selfadjoint (resp., skew-adjoint);
      \item $\dom A^*\subseteq \ran (I+A^2)$ and $\ran A^{**}\subseteq \ran (I+A^2)$ (resp., $\dom A^*\subseteq \ran (I-A^2)$ and $\ran A^{**}\subseteq \ran (I-A^2)$);
      \item $\dom A^{**}\subseteq \ran (I+A^2)$ and $\ran A^{*}\subseteq \ran (I+A^2)$ (resp., $\dom A^{**}\subseteq \ran (I-A^2)$ and $\ran A^{*}\subseteq \ran (I-A^2)$).
    \end{enumerate}
\end{corollary}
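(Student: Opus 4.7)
The direction (i)$\Rightarrow$(ii),(iii) is immediate: if $A$ is selfadjoint (resp., skew-adjoint), then Theorem~\ref{T:maintheorem2} (resp., Theorem~\ref{T:maintheorem1}) gives $\ran(I+A^2)=\hil$ (resp., $\ran(I-A^2)=\hil$), so all four inclusions hold trivially.

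For (ii)$\Rightarrow$(i) in the symmetric case, I would proceed in three steps. \emph{Step~1 (density).} If $v\perp\ran(I+A^2)$, then for every $w\in\dom A$ the identity $Aw=A^{**}w$ and the hypothesis $\ran A^{**}\subseteq\ran(I+A^2)$ yield $\sip{Aw}{v}=0$, so $v\in\dom A^*$ with $A^*v=0$; the hypothesis $\dom A^*\subseteq\ran(I+A^2)$ then provides $v=(I+A^2)u$, and $\|v\|^2=\sip{v}{(I+A^2)u}=0$ gives $v=0$. \emph{Step~2 (closure is selfadjoint).} Setting $B:=A^{**}$, the inequality $\|(I+B^2)x\|^2\geq\|x\|^2+2\|Bx\|^2+\|B^2x\|^2$ on $\dom B^2$ together with the closedness of $B$ makes $\ran(I+B^2)$ closed: if $(I+B^2)x_n\to y$, then $x_n$, $Bx_n$, and $B^2x_n$ are each Cauchy, and two applications of closedness of $B$ place the limit in $\ran(I+B^2)$. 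Since $\ran(I+A^2)\subseteq\ran(I+B^2)$ is already dense by Step~1, $\ran(I+B^2)=\hil$, and Theorem~\ref{T:maintheorem2} applied to $B$ shows $A^{**}$ selfadjoint; in particular $A^*=A^{**}$. \emph{Step~3 (algebraic finish).} For $z\in\dom A^*$, the first hypothesis delivers $y\in\dom A^2$ with $-z=(I+A^2)y$, and because $A^*z=A^{**}z\in\ran A^{**}\subseteq\ran(I+A^2)$ the second delivers $x\in\dom A^2$ with $A^*z=(I+A^2)x$. The algebraic manipulation from the proof of Theorem~\ref{T:maintheorem2} now produces $\|A(x+Ay)\|^2+\|x+Ay\|^2=0$, so $x+Ay=0$ and $z=Ax-y\in\dom A$.

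The implication (iii)$\Rightarrow$(i) uses the same blueprint with $A^*$ and $A^{**}$ interchanged: in the density step, for $v\perp\ran(I+A^2)$ the hypothesis $\ran A^*\subseteq\ran(I+A^2)$ gives $\sip{A^*w}{v}=0$ for every $w\in\dom A^*$, which places $v\in\dom(A^*)^*=\dom A^{**}$ with $A^{**}v=0$, and $\dom A^{**}\subseteq\ran(I+A^2)$ finishes off $v$; Steps~2 and~3 are unchanged. The skew-symmetric versions are parallel throughout: Theorem~\ref{T:maintheorem1} replaces Theorem~\ref{T:maintheorem2}, and the identity $\|(I-A^2)x\|^2=\|x\|^2+2\|Ax\|^2+\|A^2x\|^2$ (which uses $\sip{A^2x}{x}=-\|Ax\|^2$ from skew-symmetry) supplies the same norm control. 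The main subtlety I anticipate is the closed-range assertion in Step~2: $B^2$ need not be closed even when $B$ is, but the simultaneous control of $\|x\|$, $\|Bx\|$, and $\|B^2x\|$ by $\|(I\pm B^2)x\|$ makes the three-term Cauchy argument go through cleanly.
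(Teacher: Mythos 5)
Your proof is correct, but the converse direction takes a genuinely different route from the paper's. The paper disposes of (ii)$\Rightarrow$(i) and (iii)$\Rightarrow$(i) in one stroke by invoking the known decompositions $\dom T^{**}+\ran T^{*}=\hil$ and $\dom T^{*}+\ran T^{**}=\kil$, valid for any densely defined closable $T$ (they follow from von Neumann's theorem applied to $T^{**}$ and to $T^{*}$): since $\ran(I\pm A^2)$ is a linear subspace containing both summands of one of these decompositions, it equals $\hil$ outright, and Theorem \ref{T:maintheorem2} (resp.\ Theorem \ref{T:maintheorem1}) applied to $A$ itself finishes the proof. You instead reconstruct surjectivity by hand: an orthogonality argument giving density of $\ran(I+A^2)$, the identity $\|(I\pm B^2)x\|^2=\|x\|^2+2\|Bx\|^2+\|B^2x\|^2$ together with a double application of closedness to show that $\ran(I+B^2)$ is closed for the closure $B=A^{**}$, and then---because this only yields essential selfadjointness $A^*=A^{**}$---the algebraic computation from the proof of Theorem \ref{T:maintheorem2} to pull $\dom A^{*}$ back into $\dom A$. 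All three steps check out: $B=A^{**}$ is indeed symmetric (resp.\ skew-symmetric) and closed, the three-term Cauchy argument correctly circumvents the fact that $B^2$ need not be closed, and the final algebra uses only $A\subseteq A^*$ and the two representations $-z=(I+A^2)y$, $A^*z=(I+A^2)x$, which your hypotheses do supply once $A^*=A^{**}$ is known. What the paper's route buys is brevity and the fact that it never leaves the operator $A$, so a single application of the main theorem suffices; what yours buys is self-containedness---you never need the decomposition identities, only the main theorems and elementary arguments---at the cost of the detour through the closure and the repetition of the algebraic finish.
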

\begin{proof}
    If $A$ is selfadjoint (resp., skew-adjoint), then Theorem \ref{T:maintheorem2} (resp., Theorem \ref{T:maintheorem1}) implies that $I+A^2$ (resp., $I-A^2$) has full range. Thus (i) implies either of (ii) and (iii). Conversely, for a densely defined closable operator $T$, acting between Hilbert spaces $\hil$ and $\kil$, we have the following well known identities:
    \begin{equation*}
        \dom T^{**}+\ran T^*=\hil,\qquad \dom T^*+\ran T^{**}=\kil.
    \end{equation*}
    Hence, each of (ii) and (iii) implies that $\ran (I+A^2)=\hil$ (resp., $\ran (I-A^2)=\hil$). Due to Theorem \ref{T:maintheorem2} (resp., Theorem \ref{T:maintheorem1}) this means that $A$ is selfadjoint (resp., skew-adjoint).
\end{proof}
\begin{corollary}
    Let $(X,\mathfrak{M},\mu)$ be a measure space and $f$ be any real valued measurable function of $X$. The multiplication operator $A$ by $f$ on the (real or complex) Hilbert space $\lmu$ with maximal domain,
    \begin{equation*}
        \dom A=\set{g\in\lmu}{f\cdot g\in\lmu},
    \end{equation*}
    is selfadjoint.
\end{corollary}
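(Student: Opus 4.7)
The natural plan is to verify the two hypotheses of Theorem \ref{T:maintheorem2}, namely that $A$ is symmetric and that $I+A^2$ is of full range, and then invoke the theorem to conclude selfadjointness (which will include density of $\dom A$ as a byproduct). Symmetry is immediate from the reality of $f$: for any $g,h\in\dom A$ the function $fg\overline{h}$ is integrable and coincides pointwise almost everywhere with $g\overline{fh}$, so $\sip{Ag}{h}=\sip{g}{Ah}$.

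The content of the proof is therefore to exhibit, for an arbitrary $h\in\lmu$, a preimage $g\in\dom A^2$ satisfying $(I+A^2)g=h$. The obvious candidate is
\begin{equation*}
    g:=\frac{h}{1+f^2},
\end{equation*}
which is well defined almost everywhere since $1+f^2\geq 1$. One then reads off the three pointwise bounds
\begin{equation*}
    \frac{1}{1+f^2}\leq 1,\qquad \frac{\abs{f}}{1+f^2}\leq \tfrac12,\qquad \frac{f^2}{1+f^2}\leq 1,
\end{equation*}
which respectively show that $g$, $fg$, and $f^2g$ all belong to $\lmu$. In particular $g\in\dom A$ and $Ag=fg\in\dom A$, so $g\in\dom A^2$, and a direct calculation gives $(I+A^2)g=(1+f^2)g=h$. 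Hence $\ran(I+A^2)=\lmu$.

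I do not anticipate a genuine obstacle here; the entire argument is pointwise and elementary. The only subtlety worth flagging is that one must check $g$ lies in $\dom A^2$, not merely in $\dom A$, which is precisely what the third of the above inequalities ensures. With symmetry and surjectivity of $I+A^2$ in hand, Theorem \ref{T:maintheorem2} yields that $A$ is densely defined and selfadjoint, completing the proof.
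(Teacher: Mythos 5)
Your proposal is correct and follows exactly the paper's argument: verify symmetry from the reality of $f$, then show $\ran(I+A^2)=\lmu$ by taking the preimage $h/(1+f^2)$ and apply Theorem \ref{T:maintheorem2}. You merely spell out the pointwise bounds that the paper leaves as ``one obtains at once,'' which is a welcome addition but not a different route.
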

\begin{proof}
    It is readily seen that $A$ is a symmetric operator. For a given $g\in\lmu$, one obtains at once that $h=\dfrac{g}{1+f^2}$ belongs to $\dom A^2$ so that $(I+A^2)h=g$. That means precisely that $I+A^2$ is of full range, and therefore, in account of Theorem \ref{T:maintheorem2}, that $A$ is selfadjoint.
\end{proof}
\begin{theorem}
    Let $\hil$ be a real or complex Hilbert space, and $A:\hil\to\hil$ be a positive symmetric operator, not assumed to be densely defined. The following statements are equivalent:
    \begin{enumerate}[\upshape (i)]
      \item $A$ is selfadjoint;
      \item $I+A$ is of full range, i.e. $\ran(I+A)=\hil$.
    \end{enumerate}
\end{theorem}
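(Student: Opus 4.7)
The plan is to mirror the structure of the proof of Theorem~\ref{T:maintheorem2}, replacing $I+A^2$ by $I+A$ throughout; the linear setting turns out to be notably cleaner.

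For the implication (i)$\Rightarrow$(ii), the idea is that positivity of $A$ forces $\|(I+A)x\|^{2}=\|x\|^{2}+2\sip{Ax}{x}+\|Ax\|^{2}\ge\|x\|^{2}$ on $\dom A$, so when $A$ is in addition selfadjoint, $I+A$ is a selfadjoint operator bounded below by one. Being bounded below, it has trivial kernel and closed range; being selfadjoint, its range is also dense. Hence $\ran(I+A)=\hil$.

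For (ii)$\Rightarrow$(i) I would proceed in the same two stages as in Theorem~\ref{T:maintheorem2}. \emph{Stage 1 (density of $\dom A$).} If $y\in\ort{\dom A}$, surjectivity yields $y=(I+A)z$ for some $z\in\dom A$; testing $\sip{x}{y}=0$ against an arbitrary $x\in\dom A$ and using the symmetry of $A$ on $\dom A$ rewrites the relation as $\sip{(I+A)x}{z}=0$. As $(I+A)x$ ranges over $\hil$, this forces $z=0$, hence $y=0$. Thus $\dom A$ is dense and $A\subset A^{*}$. \emph{Stage 2 ($\dom A^{*}\subseteq\dom A$).} Given $y\in\dom A^{*}$, apply surjectivity once more to find $z\in\dom A$ with $(I+A)z=(I+A^{*})y$; then, for every $x\in\dom A$,
\begin{equation*}
\sip{(I+A)x}{y}=\sip{x}{(I+A^{*})y}=\sip{x}{(I+A)z}=\sip{(I+A)x}{z},
\end{equation*}
the last equality using symmetry of $A$ on $\dom A$. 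Since $(I+A)x$ exhausts $\hil$, this forces $y=z\in\dom A$, which is the desired inclusion.

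The main obstacle I anticipated was to derive the statement directly from Theorem~\ref{T:maintheorem2} by checking that $\ran(I+A)=\hil$ implies $\ran(I+A^{2})=\hil$; but the natural candidate $h=(I+A)^{-2}g$ produces $(I+A^{2})h=g-2(I+A)^{-1}g+2(I+A)^{-2}g\ne g$ in general, and no obvious polynomial in $B=(I+A)^{-1}$ corrects this. Fortunately re-running the argument of Theorem~\ref{T:maintheorem2} in the linear setting is easier rather than harder, because the cross-term manipulations of the form $A(x+Ay)-(Ax-y)$ that appear there have no counterpart here. It is worth noting that positivity of $A$ is really only used for (i)$\Rightarrow$(ii), to secure the lower bound $\|(I+A)x\|\ge\|x\|$; Stages 1 and 2 above go through for any symmetric $A$.
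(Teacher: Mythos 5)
Your proof is correct and follows essentially the same route as the paper: the forward direction uses that a selfadjoint operator bounded below by one has closed, dense range, and the converse runs the same two stages (density of $\dom A$ via $y=(I+A)z$ and $\sip{(I+A)x}{z}=0$, then $\dom A^*\subseteq\dom A$ by solving $(I+A)z=(I+A^*)y$ and concluding $y-z\in\ort{\ran(I+A)}=\{0\}$). Your closing observation that positivity is only needed for (i)$\Rightarrow$(ii) is also accurate.
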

\begin{proof}
    It is clear that (i) implies (ii): $I+A$ is bounded below (by one) closed operator, therefore its range is dense and closed, i.e. $\ran (I+A)=\hil.$ Conversely, $I+A$ to be a full range operator. First of all $A$ is densely defined: for if $y\in\ort{\dom A}$ then, since $y=(I+A)z$ for some $z\in\dom A$ and then for each $x$ from $\dom A$ we have that
    \begin{equation*}
        0=\sip{x}{y}=\sip{x}{(I+A)z}=\sip{(I+A)x}{z}.
    \end{equation*}
    Therefore, $z\in\ort{\ran(I+A)}=\{0\}$, i.e. $z=0$ and then $y=0$ as claimed.

    Next we have at once that
    \begin{equation*}
        (I+A)\subset(I+A)^*=(I+A^*),
    \end{equation*}
    so that $A^*=A$ is the same as $(I+A)^*=I+A$. If $y\in\dom A^*$ then we see that for some $z\in\dom A$
    \begin{equation*}
        y+A^*y=(I+A)z=(I+A^*)z,
    \end{equation*}
    and therefore
    \begin{equation*}
        (y-z)\in\ker(I+A^*)=\ort{\ran(I+A)}=\{0\}.
    \end{equation*}
    Consequently, $y=z\in\dom A$, as it is claimed.
\end{proof}
\begin{corollary}
    Let $\hil$ and $\kil$ be real or complex Hilbert spaces, $T:\hil\to\kil$ be densely defined linear operator. Then $T^*T$ is positive selfadjoint if and only if $\ran(I+T^*T)=\hil$. If $T$ is closed, then $T^*T$ is positive selfadjoint operator on $\hil$.
\end{corollary}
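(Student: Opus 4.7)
The plan is to deduce both statements from the previous theorem, which characterizes positive selfadjoint operators by the range condition on $I+A$. First I would observe that $T^*T$, defined on its natural domain
$$\dom(T^*T)=\set{x\in\dom T}{Tx\in\dom T^*},$$
is automatically positive symmetric: for $x,y\in\dom(T^*T)$ one computes $\sip{T^*Tx}{y}=\sip{Tx}{Ty}=\sip{x}{T^*Ty}$ and $\sip{T^*Tx}{x}=\|Tx\|^2\geq 0$. Applying the previous theorem with $A:=T^*T$ instantly yields the equivalence claimed in the first sentence; note that full range of $I+T^*T$ alone forces $T^*T$ to be densely defined, since that theorem guarantees this.

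For the second assertion, assume $T$ is closed. Here I would invoke the classical orthogonal decomposition of the product Hilbert space $\hil\oplus\kil$, taken with its natural inner product. Consider the unitary $V:\kil\oplus\hil\to\hil\oplus\kil$ given by $V(u,v):=(-v,u)$. A direct computation from the definition of the adjoint shows that $V(\gra(T^*))$ coincides with the orthogonal complement of $\gra(T)$ inside $\hil\oplus\kil$; since $T$ is closed, $\gra(T)$ is itself closed, whence
$$\hil\oplus\kil=\gra(T)\oplus V(\gra(T^*)).$$
For an arbitrary $h\in\hil$, decomposing $(h,0)=(x,Tx)+(-T^*y,y)$ and comparing coordinates gives $y=-Tx$ and $h=x+T^*Tx$, so that $x\in\dom(T^*T)$ and $(I+T^*T)x=h$. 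This proves $\ran(I+T^*T)=\hil$, so by the first half of the corollary $T^*T$ is positive selfadjoint.

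The first half of the corollary is essentially free once the previous theorem is at hand, so the substantive content lies in the second half, which is a reformulation of von Neumann's classical theorem. The main technical point -- and the only real obstacle -- is the orthogonal decomposition of $\hil\oplus\kil$, which depends crucially on the closedness hypothesis to ensure that $\gra(T)$ is a closed subspace; the remainder is a purely algebraic coordinate matching.
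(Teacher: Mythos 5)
Your proposal is correct and follows essentially the same route as the paper: the first assertion is reduced to the preceding theorem applied to the positive symmetric operator $T^*T$, and the second is obtained from the orthogonal decomposition of $\hil\oplus\kil$ into $\gra(T)$ and the rotated graph of $T^*$, with the same coordinate comparison yielding $h=x+T^*Tx$. The only difference is cosmetic (you name the unitary $V$ explicitly), so there is nothing to add.
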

\begin{proof}
    We should only check that if $T$ is closed then $\ran (I+T^*T)=\hil$. Of course, this is the case when the two closed subspaces are orthogonal complements on $\hil\times\kil$:
    \begin{equation*}
        \set[\big]{(x,Tx)}{x\in\dom T}\qquad\textrm{and}\qquad\set[\big]{(-T^*z,z)}{z\in\dom T^*}.
    \end{equation*}
    Therefore, for each $y\in\hil$ we find $x\in\dom T$ and $z\in\dom T^*$ such that
     \begin{equation*}
        y=x-T^*z\qquad\textrm{and}\qquad0=Tx+z.
    \end{equation*}
    Consequently, $-z=Tx\in\dom T^*$ and $-T^*z=T^*Tx$ so that
    \begin{equation*}
        y=x+T^*Tx\in\ran (I+T^*T),
    \end{equation*}
    as desired.
\end{proof}


\begin{thebibliography}{10}

\bibitem{Arens}
R.~ Arens,
\newblock Operational calculus of linear relations,
\newblock {\em Pacific J. Math.}, \textbf{11} (1961), 9--23. 

\bibitem{vonNeumann1930}
J. v. Neumann,
\newblock Allgemeine Eigenwerttheorie hermitescher Funktionaloperatoren,
\newblock {\em Mathematische Annalen}, \textbf{102} (1930), 49--131.

\bibitem{vonNeumann}
J. v. Neumann,
\newblock \"Uber adjungierte Funktionaloperatoren,
\newblock {\em The Annals of Mathematics}, \textbf{33} (1932), 294--310.

\bibitem{TadjointT}
Z. Sebesty\'en and Zs. Tarcsay, \newblock $T^{*}T$ always has a positive
selfadjoint extension,
\newblock {\em Acta Math. Hungar.}, \textbf{135} (2012), 116--129.

\bibitem{Characterization}
Z. Sebesty\'en and Zs. Tarcsay,
\newblock Characterizations of selfadjoint operators, \emph{Studia Sci. Math. Hungar.} \textbf{50} (2013), 423--435.


\bibitem{Essential}
Z. Sebesty\'en and Zs. Tarcsay,
\newblock Characterizations of essentially self-adjoint and skew-adjoint operators, \emph{submitted}.


\bibitem{Stone}
M.~H.~Stone,
\newblock {\em Linear transformations in Hilbert space,}
\newblock American Mathematical Society Colloquium Publications, 15. (American Mathematical Society, Providence, Rhode Island: 1932.)

\bibitem{Weidmann}
J.~Weidmann,
\newblock {\em Linear operators in Hilbert spaces},
\newblock Springer-Verlag, Berlin, Heidelberg, New York, 1980.
\end{thebibliography}
\end{document}